\documentclass[11pt]{amsart}
\usepackage{amsopn}
\usepackage{amssymb}
\usepackage{graphicx}
\usepackage[all]{xy}
\usepackage{amscd}
\usepackage{color}
\usepackage[colorlinks]{hyperref}
 \newtheorem{thm}{Theorem}[section]

 \newtheorem{exam}[thm]{Example}
 
 \theoremstyle{definition}
 \theoremstyle{remark}
\begin{document}
\title[$*$-multiplication operators]
{ Viewing $*$-multiplication operators between Orlicz spaces }
\author{jahangir cheshmavar}
\author{seyed kamel hosseini}

\email{j$_{_-}$cheshmavar@pnu.ac.ir}
\email{kamelhosseini@chmail.ir}

\address{faculty of mathematics, payame noor  university,
p. o. box 19395-3697, tehran, iran.}

\thanks{}
\thanks{}
\subjclass[2010]{Primary 47B38; Secondary 46E30.}
\keywords{Lambert multipliers, Orlicz spaces, Conditional
expectation operator, Fredholm operator, Closed range operator}
\dedicatory{}
\commby{}
\begin{abstract}
In this paper, Lambert multipliers acting between Orlicz spaces
are characterized based on some properties of conditional
expectation operators. Also, we provide a necessary and sufficient
condition for the $*$-multiplication operators to have closed
range. Finally, the necessary condition for Fredholmness of these
type operators are investigated.
\end{abstract}
\maketitle


\section{introduction and preliminaries}\label{Sec1}

Operator in function spaces defined by conditional expectations
appeared already in Chen and Moy paper ~\cite{Chen.Moy} and Sidak
~\cite{Sidak}, see for example Brunk ~\cite{Brunk}, in the setting
of $L^p$ spaces. This class of operators was further studied by
Lambert ~\cite{Alan.1, Alan.2, Alan.3}, Herron ~\cite{Herron},
Takagi and Yokouchi ~\cite{ Takagi.1999}. Later, Jabbarzadeh and
Sarbaz in ~\cite{Jabbarzadeh.2010} have characterized the Lambert
multipliers acting between two $L^p$-spaces, by using some
properties of conditional expectation operator. Also,
Multiplication operators have been a subject of research of many
mathematicians, see for instance, ~\cite{Raj, Seid, Sharma}. In
this paper, Lambert multipliers on Orlicz spaces are considered.
Also, characterizations of existence and closedness of the range
of $*$-multiplication operators are provided.\\
 Let $\Phi: \mathbb{R} \rightarrow \mathbb{R}^+$ be a continuous
convex function satisfying the conditions: $\Phi(x)=\Phi(-x)$,
$\Phi(0)=0$ and $\lim_{x\rightarrow\infty} \Phi(x)= +\infty$,
where $\mathbb{R}$ denotes the set of real numbers. With each such
function $\Phi$, one can associate another convex function $\Psi :
\mathbb{R} \rightarrow \mathbb{R}^+$ having similar properties,
which is defined by $$\Psi(y)=\sup\{x|y|-\Phi(x): x\geq 0\},\,\
y\in \mathbb{R}~.$$ The function $\Phi$ is called a \textit{Young
function}, and $\Psi$ the \textit{complementary function} to
$\Phi$. A Young function $\Phi$ is said to satisfy the
$\bigtriangleup_{2}$-condition (globally) if $\Phi(2x)\leq
k\Phi(x), x\geq x_{0}\geq 0\,\ (x_{0}=0)$ for some absolute
constant $k>0$. The simple functions are not dense in
$L^\Phi(\Sigma)$ but if $\Phi$ satisfies the
$\bigtriangleup_{2}$-condition, then the class of simple functions
is dense in $L^\Phi(\Sigma)$.  Throughout this paper we assume
that $\Phi$ satisfies $\bigtriangleup_{2}$-condition. Throughout
this paper we assume that $\Phi$ satisfies
$\bigtriangleup_{2}$-condition.

Let $ (X,\Sigma,\mu)$ be a complete $\sigma$-finite measure space
and $\Phi$ be a Young function, then the set of
$\Sigma$-measurable functions
$$L^\Phi(\Sigma):=\{f:X\rightarrow \mathbb{C};
 \int_{X}\Phi(\varepsilon|f|)d\mu<\infty, \,\ \mbox{for some}\,\ \varepsilon>0\}~,$$ is
a Banach space, with respect to the norm
$$\|f\|_{\Phi}=\inf\{\varepsilon>0 : \int_{X}\Phi(|f|/\varepsilon)d\mu\leq
1\}~.$$ Such a space is known as a \textit{Orlicz space}.
For more details concerning Young function and Orlicz spaces,
 we refer to Labuschagne, Rao and Ren in \cite{Labuschagne, Rao}.\\
If $\Phi(x)=|x|^p, \,\ 1\leq p <
 \infty$, then $L^\Phi(\Sigma) = L^p(\Sigma)$, the usual $p$-integrable
 functions on $X$.

 Let $\mathcal{A}\subseteq \Sigma$ be complete
$\sigma$-finite subalgebra. We view
$L^p(\mathcal{A})=L^p(X,\mathcal{A},\mu|_{\mathcal{A}})$ as a
Banach subspace of $L^p(\Sigma)$. Denote the vector space of all
equivalence classes of almost everywhere finite valued $\Sigma$-
measurable functions on $X$, by $L^0(\Sigma)$. For each
nonnegative $f\in L^0(\Sigma)$ or $f\in L^p(\Sigma)$, by the
Radon-Nikodym theorem, there exists a unique
measurable function $E(f)$ with the following conditions:\\
(i) $E(f)$ is $\mathcal{A}$-measurable;\\
(ii) If $A$ is any $\mathcal{A}$-measurable set for which
$\int_Afd\mu$ converges, we have
$$\int_{A}fd\mu=\int_{A}E(f)d\mu~.$$
For every complete $\sigma$-finite subalgebra
$\mathcal{A}\subseteq \Sigma$, the mapping $f\longmapsto E(f)$,
from $L^p(\Sigma)$ to $L^p(\mathcal{A}), 1\leq p \leq \infty$, is
called the \textit{conditional expectation operator with respect
to $\mathcal{A}$}. As an operator on $L^p(\Sigma)$, $E(\cdot)$ is
contractive idempotent and $E(L^p(\Sigma))=L^p(\mathcal{A})$. We
will need the following standard facts concerning $E(f)$, for more
details of the properties of $E$, we refer the interested
 reader to \cite{Herron,Alan.2,Rao.1993}:\\
\begin{itemize}
  \item If $g$ is $\mathcal{A}$-measurable then $E(fg)=E(f)g$;
  \item $|E(f)|^p\leq E(|f|^p)$;
  \item $\|E(f)\|_p\leq\|f\|_p$;
  \item If $f\geq 0$ then $E(f)\geq 0$; if $f>0$ then $E(f)>0$;
  \item $E(1)=1$.
\end{itemize}

Let $\Phi$ be a Young function and $f \in L^\Phi(\Sigma)$, since
$\Phi$ is convex, by Jensen's inequality,
\begin{itemize}
  \item $\Phi(|E(f)|)\leq E(\Phi(|f|))$;
\end{itemize}
and consequently,
$$\int_{X}\Phi(\frac{E(f)}{\|f\|_{\Phi}})d\mu =\int_{X}\Phi(E(\frac{f}{\|f\|_{\Phi}}))
  \leq \int_{X}E(\Phi(\frac{f}{\|f\|_{\Phi}}))
  =\int_{X}\Phi(\frac{f}{\|f\|_{\Phi}}),$$
this implies that
\begin{itemize}
  \item $\|E(f)\|_\Phi\leq\|f\|_\Phi$,
\end{itemize}
that is, $E$ is contraction on Orlicz spaces. Now let $$D(E) :=
\{f \in L^0(\Sigma) : E(|f|) \in L^0(\mathcal{A})\}~,$$ then $f$
is said to be \textit{conditionable} with respect to $E$ if $f\in
D(E)$. For $f$ and $g$ in $D(E)$, we define $$f*g = fE(g)+gE(f)-
E(f)E(g)~.$$ Let $\Phi$ and $\Psi$ be Young functions. A
measurable function $u$ in $D(E)$ for which $u*f \in
L^{\Psi}(\Sigma)$ for each $f \in L^{\Phi}(\Sigma)$ is called a
\textit{Lambert multiplier}. In other words, $u \in D(E)$ is a
Lambert multiplier if and only if the corresponding
$*$-multiplication operator $T_u : L^{\Phi}(\Sigma) \rightarrow
L^{\Psi}(\Sigma)$ defined as $T_uf = u*f$ is bounded. Our
exposition regarding Lambert multipliers follows ~\cite{Herron,
Alan.3}.

As a more results of Lambert multipliers, we mention the upcoming
paper ~\cite{Cheshmavar}, in which the present authors showed that
the set of all Lambert multipliers acting between $L^p$-spaces are
commutative Banach algebra.

 In the following sections, Lambert multipliers acting between Orlicz
spaces are characterized. Also, we give necessary and sufficient
condition on $*$-multiplication operators to be closed range.
Finally, Fredholmness of the corresponding $*$-multiplication
operators is investigated.


\section{characterization of lambert multipliers}

Let $\Phi$ and $\Psi$ be Young functions. Define
$K^{*}_{\Phi,\Psi}$, the set of all Lambert multipliers from
$L^{\Phi}(\Sigma)$ into $L^{\Psi}(\Sigma)$, as follows:
$$K^{*}_{\Phi,\Psi}:=\{u\in D(E):u*L^{\Phi}(\Sigma)\subseteq L^{\Psi}
(\Sigma)\}~.$$ $K^{*}_{\Phi,\Psi}$ is a vector subspace of $D(E)$.
Put $K^{*}_{\Phi,\Phi}=K^{*}_{\Phi}$. In the following theorem we
characterize the members of $K^{*}_{\Phi}$.

\begin{thm}
Let $\Phi$ be Young function and $u\in D(E)$. Then $u\in
K^{*}_{\Phi}$ if and
 only if $E(\Phi(|u|))\in L^{\infty}(\mathcal{A})$.
\end{thm}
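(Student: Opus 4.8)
The plan is to start from the defining relation and rewrite the $*$-product in a form that isolates the genuinely non-trivial contribution. Using $E(1)=1$ and the module property $E(fg)=E(f)g$ for $\mathcal{A}$-measurable $g$, one checks directly that
\[ u*f = uE(f) + E(u)\,(f-E(f)), \]
and also that $E(u*f)=E(u)E(f)$. For the sufficiency direction it therefore suffices to bound each of the two maps $f\mapsto uE(f)$ and $f\mapsto E(u)(f-E(f))$ separately. The second map is the easy one: by Jensen's inequality (as recorded in the preliminaries) $\Phi(|E(u)|)\le E(\Phi(|u|))$, so if $E(\Phi(|u|))\le M$ almost everywhere then $|E(u)|\le \Phi^{-1}(M)\in L^\infty(\mathcal{A})$; multiplication by an $L^\infty$ function is bounded on $L^\Phi$, and since $\|E(f)\|_\Phi\le\|f\|_\Phi$ we obtain $\|E(u)(f-E(f))\|_\Phi\le 2\Phi^{-1}(M)\,\|f\|_\Phi$.

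Everything then reduces to the crux estimate $\|uE(f)\|_\Phi\le C\|f\|_\Phi$, that is, to boundedness of multiplication by $u$ from the $\mathcal{A}$-measurable functions $g=E(f)$ into $L^\Phi(\Sigma)$. Here I would pass to the modular and use the averaging identity $\int_X \Phi(w)\,d\mu=\int_X E(\Phi(w))\,d\mu$ to write $\int_X\Phi(|u|\,|E(f)|/\lambda)\,d\mu=\int_X E\big(\Phi(|u|\,|E(f)|/\lambda)\big)\,d\mu$. Because $|E(f)|$ is $\mathcal{A}$-measurable it behaves as a constant inside $E$, and the hypothesis $E(\Phi(|u|))\le M$ is exactly a conditional-$L^\Phi$ bound on $u$: from convexity, $E(\Phi(|u|/\eta))\le M/\eta\le 1$ once $\eta\ge M$, so the conditional Luxemburg norm of $u$ is at most $\max(1,M)$ almost everywhere. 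Combining this with a conditional H\"older inequality (equivalently, with the two-regime convexity bound made quantitative by the $\bigtriangleup_2$-condition) and then with $\int_X\Phi(|E(f)|)\,d\mu\le 1$ for $\|f\|_\Phi\le1$, one controls the modular and hence $\|uE(f)\|_\Phi$. I expect this to be the main obstacle: in the model case $\Phi(x)=|x|^p$ the identity $\Phi(ab)=\Phi(a)\Phi(b)$ lets one factor $|E(f)|^p$ straight out of $E$ and the estimate is immediate, whereas for a general Young function $\Phi$ is not multiplicative, and the separation of the $u$- and $E(f)$-dependence is precisely where the $\bigtriangleup_2$-condition must be invoked.

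For the converse I would test only on $\mathcal{A}$-measurable functions. If $f$ is $\mathcal{A}$-measurable then $E(f)=f$, so $f-E(f)=0$ and the $*$-product collapses to the ordinary product $u*f=uf$. Assuming $E(\Phi(|u|))\notin L^\infty(\mathcal{A})$, the sets $\{E(\Phi(|u|))>n\}$ have positive measure and are $\mathcal{A}$-measurable, so by $\sigma$-finiteness they contain $\mathcal{A}$-measurable subsets $B_n$ with $0<\mu(B_n)<\infty$. Taking $f_n=\chi_{B_n}/\|\chi_{B_n}\|_\Phi$ gives $\|f_n\|_\Phi=1$, while again by the averaging identity $\int_{B_n}\Phi(|u|)\,d\mu=\int_{B_n}E(\Phi(|u|))\,d\mu> n\,\mu(B_n)$; choosing the $B_n$ so that $\|\chi_{B_n}\|_\Phi\le 1$ forces the modular of $u\chi_{B_n}$ to blow up, whence $\|u*f_n\|_\Phi=\|u\chi_{B_n}\|_\Phi/\|\chi_{B_n}\|_\Phi\to\infty$. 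This contradicts the boundedness of $T_u$ and shows that $E(\Phi(|u|))\in L^\infty(\mathcal{A})$ is necessary.
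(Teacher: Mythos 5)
Your decomposition $u*f=uE(f)+E(u)\,(f-E(f))$ is correct, and your treatment of the second term via Jensen's inequality matches what the paper does with the terms $E(u)f$ and $E(u)E(f)$. The genuine gap is exactly where you flagged it: the crux estimate $\|uE(f)\|_\Phi\le C\|f\|_\Phi$ is never actually proved. ``Combining this with a conditional H\"older inequality \dots one controls the modular'' is a plan, not an argument, and the route you sketch does not close. Write $g=E(f)$ and split into the two convexity regimes: where $|g|/\lambda\le 1$ you get $E\bigl(\Phi(|u|\,|g|/\lambda)\bigr)\le (M/\lambda)|g|$, and where $|g|/\lambda>1$ the global $\bigtriangleup_2$-condition gives $\Phi(st)\le k\,s^{\alpha}\Phi(t)$ for $s\ge 1$, hence a majorant of the form $kM(|g|/\lambda)^{\alpha}$. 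Integrating the first majorant requires $g\in L^1$ and the second requires $g\in L^{\alpha}$, and neither is implied by $g\in L^{\Phi}(\mathcal{A})$ on an infinite $\sigma$-finite measure space: for the paper's own example $\Phi(x)=(1+|x|)\log(1+|x|)-|x|$ (which is in $\bigtriangleup_2$) one has $\Phi(x)\sim x^2/2$ near $0$, so $L^\Phi$ contains functions outside $L^1$. Since the hypothesis $E(\Phi(|u|))\le M$ bounds only a conditional modular and gives no pointwise bound on $|u|$, the non-multiplicativity of $\Phi$ is a real obstruction here, not a technicality. (For what it is worth, the paper itself disposes of this very term in one line --- ``a similar argument, using the fact that $E(fE(g))=E(f)E(g)$'', citing Rao--Ren, Prop.~3, p.~60 --- so you have correctly located the one step the published proof leaves essentially unargued; but locating the obstacle is not the same as overcoming it.)

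Your converse is a genuinely different route from the paper's, and it is the better-motivated one: you test $T_u$ on normalized $\mathcal{A}$-measurable characteristic functions, where $u*f$ collapses to the ordinary product $uf$, whereas the paper defines the functional $\Lambda(f)=\int_X E(\Phi(|u|))f\,d\mu$ on $L^1(\mathcal{A})$, bounds it by $C\|T_u\|^{\alpha}\|f\|_1$ using the $\bigtriangleup_2$ growth bound $\Phi(x)\le Cx^{\alpha}$, and invokes the Riesz representation of $(L^1)^*=L^\infty$. However, as written your blow-up claim has a quantitative hole: from $\int_{B_n}\Phi(|u|)\,d\mu>n\,\mu(B_n)$ the modular of $u\chi_{B_n}$ need not blow up at all --- it can tend to $0$ if $\mu(B_n)\to 0$ quickly. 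What you need is the ratio $\|u\chi_{B_n}\|_\Phi/\|\chi_{B_n}\|_\Phi\to\infty$, and this can be repaired using $\bigtriangleup_2$ in both directions: $\Phi(x/\lambda)\ge k^{-1}\lambda^{-\alpha}\Phi(x)$ yields $\|u\chi_{B_n}\|_\Phi\ge (n\,\mu(B_n)/k)^{1/\alpha}$, while $\Phi(x)\le Cx^{\alpha}$ yields $\|\chi_{B_n}\|_\Phi=1/\Phi^{-1}(1/\mu(B_n))\le (C\mu(B_n))^{1/\alpha}$ for small $\mu(B_n)$, so the ratio grows like $n^{1/\alpha}$, contradicting boundedness of $T_u$. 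So the necessity direction is salvageable (and arguably cleaner than the paper's, which applies $\|T_u\|$ to an $L^{\alpha}$-norm estimate); the unproved sufficiency estimate for $uE(f)$ is the genuine gap in your proposal.
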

\begin{proof}
Let $E(\Phi(|u|))\in L^{\infty}(\mathcal{A})$ and $f\in
L^{\Phi}(\Sigma)$. Since $\Phi(|E(u)|)\leq E(\Phi(|u|))\leq
\|E(\Phi(|u|))\|_{\infty}$ a.e., that is, $|E(u)|\leq
\Phi^{-1}(\|E(\Phi(|u|))\|_{\infty})$, it follows ( Prop. 3 page
60 in \cite{Rao}) that,
  $$\int_{X}\Phi(\frac{|E(u)f|}{\Phi^{-1}(\|E(\Phi(|u|))\|_{\infty})\|f\|_{\Phi}})
  d\mu  \leq \int_{X}\Phi(\frac{|f|}{\|f\|_{\Phi}})d\mu \leq 1~.$$
Hence $\|E(u)f\|_{\Phi}\leq
\Phi^{-1}(\|E(\Phi(|u|))\|_{\infty})\|f\|_{\Phi}$. A similar
argument, using the fact that $E(fE(g)) = E(f)E(g)$, we also have
$$\int_{X}\Phi(\frac{|uE(f)|}{\Phi^{-1}(\|E(\Phi(|u|))\|_{\infty})
\|f\|_{\Phi}}) d\mu  \leq
\int_{X}\Phi(\frac{|f|}{\|f\|_{\Phi}})d\mu \leq 1~.$$ Thus
$\|E(u)E(f)\|_{\Phi}\leq \|uE(f)\|_{\Phi}\leq
\Phi^{-1}(\|E(\Phi(|u|)) \|_{\infty})\|f\|_{\Phi}$. Now, we get
that $$\|u*f\|_{\Phi} \leq
\|E(u)f\|_{\Phi}+\|uE(f)\|_{\Phi}+\|E(u)E(f)\|_{\Phi}\leq
3\Phi^{-1} (\|E(\Phi(|u|))\|_{\infty})\|f\|_{\Phi}~.$$ It follows
that $u*f\in L^{\Phi}(\Sigma)$, hence $u\in K^{*}_{\Phi}$.

Now, let $u\in K^{*}_{\Phi}$. A straightforward application of the
closed graph theorem shows that the operator $T_u :
L^{\Phi}(\Sigma) \rightarrow L^{\Phi}(\Sigma)$ given by $T_uf =
u*f$ is bounded. Define a linear functional $\Lambda$ on
$L^1(\mathcal{A})$ by
$$\Lambda(f)=\int_X E(\Phi(|u|))f d\mu,\quad\quad f\in L^1(\mathcal{A})~.$$
We show that $\Lambda$ is bounded.  Since $\Phi$ satisfies
$\bigtriangleup_{2}$-condition, we have (by the Corollary 5 page
26 in \cite{Rao}):
\begin{align*}
  |\Lambda(f)| &\leq \int_{X}E \left( \Phi(|u|) \right)|f| d\mu
               =\int_{X}E \left( \Phi(|u|)|f| \right) d\mu\\
               &\leq\int_{X} E\left(C|u|^{\alpha}|f|\right)d\mu
               =\int_{X} E\left(C\left(|u||f|^{\frac{1}{\alpha}}\right)^{\alpha}\right)d\mu\\
               &= \int_{X}C\left(|u||f|^{\frac{1}{\alpha}}\right)^{\alpha}d\mu
               = C\|T_{u}|f|^{\frac{1}{\alpha}}\|^{\alpha}_{\alpha}\\
               &\leq C\|T_{u}\|^{\alpha}\||f|^{\frac{1}{\alpha}}\|^{\alpha}_{\alpha}
               = C\|T_{u}\|^{\alpha}\|f\|_{1},
\end{align*}
 for some $\alpha>1, C>0$. Consequently, $\Lambda$ is a bounded linear
functional on $L^1(\mathcal{A})$ and
 $\|\Lambda\|\leq C\|T_u\|^{\alpha}$. By the Riesz representation theorem, there
  exists a unique function $g \in L^{\infty}(\mathcal{A})$ such that
$$\Lambda(f)=\int_X gf d\mu,\quad\quad f\in L^1(\mathcal{A})~.$$
Therefore, $g = E(\Phi(|u|))$ a.e. on $X$ and hence
 $E(\Phi(|u|))\in L^{\infty}(\mathcal{A})$~.
\end{proof}
Let $m$ be the collection $\{T_u : u\in K^{*}_{\Phi}\}$. An easy
consequence of the closed graph theorem shows that $m$ consist of
continuous linear transformations. Since $T_u T_v=T_{u*v}$ (it is
obvious), $m$ is commutative algebra. Similar argument as in the
proof of Theorem 4.1 in \cite{Alan.4}, $m$ is maximal abelian and
hence it is norm closed.\\ For $u\in K^{*}_{\Phi}$, we define its
norm by $\|u\|_{K^{*}_{\Phi}}:=
\Phi^{-1}(\|E(\Phi(|u|)\|_{\infty})$  such that
$(K^{*}_{\Phi},\|u\|_{K^{*}_{\Phi}})$ is respected as a normed
space. The next result reads as follows:

\begin{thm}
Let $u\in K^{*}_{\Phi}$, then the following holds:
\begin{itemize}
\item [(i)] $\|u\|_{K^{*}_{\Phi}}\leq \|T_u\|\leq
3\|u\|_{K^{*}_{\Phi}}$,
 \item [(ii)] $(K^{*}_{\Phi},\|u\|_{K^{*}_{\Phi}})$ is a
Banach space.
\end{itemize}
\end{thm}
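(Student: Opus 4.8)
The plan is to treat the two estimates in (i) separately and then obtain (ii) formally from (i) together with the fact, recorded in the discussion just before the statement, that $m=\{T_u:u\in K^{*}_{\Phi}\}$ is norm closed in $B(L^{\Phi}(\Sigma))$. For the upper estimate in (i) nothing new is required: the chain of inequalities established in the proof of the preceding theorem already gives $\|u*f\|_{\Phi}\le 3\,\Phi^{-1}(\|E(\Phi(|u|))\|_{\infty})\,\|f\|_{\Phi}=3\|u\|_{K^{*}_{\Phi}}\|f\|_{\Phi}$ for every $f\in L^{\Phi}(\Sigma)$, so taking the supremum over the unit ball yields $\|T_u\|\le 3\|u\|_{K^{*}_{\Phi}}$.

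For the lower estimate I write $M=\|E(\Phi(|u|))\|_{\infty}$, so that the goal is $\Phi^{-1}(M)\le\|T_u\|$. I would first record the decomposition $u*f=E(u)\,f+E(f)\,(u-E(u))$, immediate from the definition of $*$ and $E(u-E(u))=0$. Fixing $\epsilon>0$ and using $\sigma$-finiteness, I would choose $A\in\mathcal{A}$ with $0<\mu(A)<\infty$ on which $E(\Phi(|u|))>M-\epsilon$, test $T_u$ on functions concentrated on $A$, and compare the Luxemburg norm of $u*f$ with the modular $\int_{A}\Phi(|u|/c)\,d\mu$, passing through $\int_{A}\Phi(|u|)\,d\mu=\int_{A}E(\Phi(|u|))\,d\mu\ge(M-\epsilon)\mu(A)$ and then letting $\epsilon\to0$.

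The hard part is precisely the construction of these extremal test functions. Restricting $T_u$ to $\mathcal{A}$-measurable $f$ does not suffice: there $u*f=uf$, and the resulting multiplication estimate can fall strictly below $\Phi^{-1}(M)$ when $\Phi$ is not homogeneous, so one is forced to use genuinely $\Sigma$-measurable $f$ and to engage both terms $E(u)f$ and $E(f)(u-E(u))$ of the decomposition above. The delicate step is to convert the modular identity $\int_{A}\Phi(|u|/c)\,d\mu=1$, valid for the Luxemburg norm under the $\bigtriangleup_{2}$-condition, into a clean bound of the form $\|u*f\|_{\Phi}\ge\Phi^{-1}(M-\epsilon)\|f\|_{\Phi}$ in spite of the nonlinear rescaling inside $\Phi$; here the convexity of $\Phi$ and the $\bigtriangleup_{2}$-condition must be used with care, and I expect this to be the main obstacle of the whole argument.

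Finally, (ii) follows formally from (i). The two inequalities in (i) show that a sequence is Cauchy, respectively convergent, for $\|\cdot\|_{K^{*}_{\Phi}}$ if and only if its image under $u\mapsto T_u$ is Cauchy, respectively convergent, for the operator norm. Concretely, let $(u_n)$ be Cauchy in $(K^{*}_{\Phi},\|\cdot\|_{K^{*}_{\Phi}})$. The right-hand inequality gives $\|T_{u_n}-T_{u_m}\|=\|T_{u_n-u_m}\|\le 3\|u_n-u_m\|_{K^{*}_{\Phi}}$, so $(T_{u_n})$ is Cauchy in $B(L^{\Phi}(\Sigma))$; since $m$ is norm closed there, $T_{u_n}\to T_u$ for some $u\in K^{*}_{\Phi}$. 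The left-hand inequality then gives $\|u_n-u\|_{K^{*}_{\Phi}}\le\|T_{u_n-u}\|=\|T_{u_n}-T_u\|\to0$, whence $u_n\to u$ in $K^{*}_{\Phi}$. Thus $(K^{*}_{\Phi},\|\cdot\|_{K^{*}_{\Phi}})$ is complete, i.e. a Banach space.
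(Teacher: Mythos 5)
There is a genuine gap, and you flag it yourself: the lower estimate $\|u\|_{K^{*}_{\Phi}}\leq \|T_u\|$ is never actually proved. Your plan — pick $A\in\mathcal{A}$ with $0<\mu(A)<\infty$ on which $E(\Phi(|u|))>M-\epsilon$ and test $T_u$ on functions supported there — stalls precisely at the step you identify as the obstacle: converting the modular information $\int_{A}\Phi(|u|)\,d\mu=\int_{A}E(\Phi(|u|))\,d\mu\geq (M-\epsilon)\mu(A)$ into the Luxemburg-norm bound $\|u*f\|_{\Phi}\geq \Phi^{-1}(M-\epsilon)\|f\|_{\Phi}$, which fails to follow by any simple rescaling when $\Phi$ is inhomogeneous. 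Since your proof of (ii) consumes both inequalities of (i), the whole argument is incomplete. For what it is worth, the paper does not use test functions at all: it recycles the functional $\Lambda(f)=\int_{X}E(\Phi(|u|))f\,d\mu$ from the proof of Theorem 2.1 and writes $\|E(\Phi(|u|))\|_{\infty}=\sup_{\|f\|_{1}\leq 1}\int_{X}E(\Phi(|u|))|f|\,d\mu\leq C\|T_u\|^{\alpha}$ (with $C,\alpha$ the constants coming from the $\bigtriangleup_{2}$-condition), and then asserts $\Phi^{-1}(\|E(\Phi(|u|))\|_{\infty})\leq\|T_u\|$. You should note that this last deduction is itself not a clean implication — the constants $C$ and $\alpha$ do not disappear — so the difficulty you ran into is symptomatic of a real delicacy in the statement; but as a blind proof attempt, yours stops exactly where the work was required.

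Your treatment of (ii), by contrast, is correct modulo (i) and takes a genuinely different, and cleaner, route than the paper. You use the norm closedness of $m=\{T_u:u\in K^{*}_{\Phi}\}$ (asserted in the discussion preceding the theorem) so that, by the two-sided estimate in (i), the linear map $u\mapsto T_u$ is an isomorphism of $(K^{*}_{\Phi},\|\cdot\|_{K^{*}_{\Phi}})$ onto a closed subspace of the bounded operators, and completeness transfers back formally. The paper instead shows $\{T_{u_n}\}$ is Cauchy in the weak operator topology, invokes maximal abelianness of $m$ to extract a weak limit $T_{u_0}$, and then passes through dominated convergence to get $u_n\to u_0$ a.e.\ and $E(\Phi(|u_n-u_0|))\to 0$ a.e.\ — an argument with soft spots, since pointwise a.e.\ convergence of $E(\Phi(|u_n-u_0|))$ does not by itself yield $\|u_n-u_0\|_{K^{*}_{\Phi}}\to 0$ (and the paper's final line even reads ``$\rightarrow\infty$,'' evidently a typo for $\rightarrow 0$). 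So your (ii) would have been an improvement on the printed proof; the missing piece of your proposal is solely the left inequality in (i).
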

\begin{proof}
 In order to prove (i), assume that $u\in K^{*}_{\Phi}$ and $f\in L^1(\mathcal{A})$.
 Then, $E(\Phi(|u|))\in L^{\infty}(\mathcal{A})$, and
$$\|E(\Phi(|u|)\|_{\infty}=\sup_{\|f\|\leq1}\int_{X}E \left( \Phi(|u|) \right)|f| d\mu
\leq C\|T_u\|^{\alpha}~.$$ That is,
$\Phi^{-1}(\|E(\Phi(|u|)\|_{\infty})\leq \|T_u\|$. It follows
that $\|u\|_{K^{*}_{\Phi}}\leq \|T_u\|$. On the other hand, by the
properties of conditional expectation operators, it is easy to see
that for each $f \in L^{\Phi}(\Sigma)$
 with $\|f\|_{\Phi} \leq 1$,
$$\max \{\|E(u)f\|_{\Phi},\|uE(f)\|_{\Phi}, \|E(u)E(f)\|_{\Phi} \}\leq \Phi^{-1}
(\|E(\Phi(|u|)\|_{\infty})~,$$ and so $\|T_u\|\leq
3\|u\|_{K^{*}_{\Phi}}$.

For the proof of (ii), assume that $\{u_n\}_{n=1}^{\infty}$  be a
Cauchy sequence with respect to the norm $\|.\|_{K^{*}_{\Phi}}$.
Let $f \in L^{\Phi}(\Sigma)$ and $g \in L^{\Psi}(\Sigma)$ be
arbitrary elements, then
$$|\int_X T_{u_n-u_m}(f)\overline{g}d\mu|\leq 3\|u_n-u_m\|_{K^{*}_{\Phi}}
\|f\|_{\Phi}\|g\|_{\Psi}~,$$ that is, $\{T_{u_n}\}_{n=1}^{\infty}$
is a Cauchy sequence in the weak operator topology. The subalgebra
$m$ is maximal abelian and so it is weakly closed. Therefore,
$\{T_{u_n-u_0}\}_{n=1}^{\infty}$ is weakly convergent
 to zero, for some $u_0 \in K^{*}_{\Phi}$. By the dominated convergence theorem
 we have
$$\int_{X}\lim_{n\rightarrow\infty}(u_n-u_0)fgd\mu=\lim_{n\rightarrow\infty}\int_{X}
T_{u_n-u_0}(f)gd\mu=0~.$$ Thus,
$\lim_{n\rightarrow\infty}(u_n-u_0)=0$, a.e. on $X$ and since $E$
is a contraction map, then,
$\lim_{n\rightarrow\infty}E(\Phi(|u_n-u_0|))=0$, a.e. on $X$.
Finally, $\|u_n-u_0\|_{K^{*}_{\Phi}}\rightarrow\infty$
  as $n\rightarrow\infty$.
\end{proof}


\section{fredholm $*$-multiplication operators}

In the following theorem, we establish a condition for a
$*$-multiplication operator $T_u$ to have closed range. We use the
symbols $\mathcal{N}(T_u)$ and $\mathcal{R}(T_u)$ to denote the
kernel and the range of $T_u$, respectively.

\begin{thm}
Let $u\in K^{*}_{\Phi}$. Then $T_u$ is closed range if and only if
there exists $\delta > 0$ such that $E\left(\Phi(|u|)\right) >
\delta$, almost everywhere on the support of $E(u)$.
\end{thm}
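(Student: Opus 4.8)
The plan is to recast closed range of $T_u$ as a lower bound modulo the kernel. Since $T_u$ is bounded (by the closed graph argument used for the first theorem of Section 2), $\mathcal{R}(T_u)$ is closed if and only if there is a constant $c>0$ with $\|T_uf\|_{\Phi}\ge c\,\mathrm{dist}\bigl(f,\mathcal{N}(T_u)\bigr)$ for every $f\in L^{\Phi}(\Sigma)$. Before estimating I would record two identities. Writing $f=E(f)+(f-E(f))$ and using $E(hg)=E(h)g$ for $\mathcal{A}$-measurable $g$, one gets $T_uf=E(u)f+(u-E(u))E(f)$ and, applying $E$, $E(T_uf)=E(u)E(f)$. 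Put $S:=\{E(u)\ne 0\}$; since $E(u)$ is $\mathcal{A}$-measurable, $S\in\mathcal{A}$, and the \emph{support of $E(u)$} means $S$. If $f\in\mathcal{N}(T_u)$ then on $S$ the identity $E(u)E(f)=E(T_uf)=0$ forces $E(f)=0$, whence $T_uf=E(u)f=0$ gives $f=0$ a.e. on $S$. Thus $\mathcal{N}(T_u)$ contains no nonzero function supported on $S$, and by monotonicity of $\|\cdot\|_{\Phi}$ one gets $\mathrm{dist}(\chi_A,\mathcal{N}(T_u))=\|\chi_A\|_{\Phi}$ for every $A\in\mathcal{A}$ with $A\subseteq S$.

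For necessity, assume $\mathcal{R}(T_u)$ is closed and fix the constant $c>0$ above. Testing on the $\mathcal{A}$-measurable functions $\chi_A$ with $A\subseteq S$ (for which $E(\chi_A)=\chi_A$, hence $T_u\chi_A=u\chi_A$) yields $\|u\chi_A\|_{\Phi}\ge c\,\|\chi_A\|_{\Phi}$. I would then pass from norms to modulars through the identity $\int_X\Phi(|u\chi_A|/\lambda)\,d\mu=\int_A E\bigl(\Phi(|u|/\lambda)\bigr)\,d\mu$, valid because $A\in\mathcal{A}$, and invoke the $\triangle_2$-condition to compare $\Phi(|u|/\lambda)$ with $\Phi(|u|)$ up to a multiplicative constant. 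Running this over a hypothetical sequence $A_n\subseteq S$ on which $E(\Phi(|u|))<1/n$ makes $\|u\chi_{A_n}\|_{\Phi}/\|\chi_{A_n}\|_{\Phi}\to 0$, contradicting the lower bound; hence $E(\Phi(|u|))\ge\delta$ a.e. on $S$ for some $\delta>0$.

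For sufficiency, assume $E(\Phi(|u|))>\delta$ on $S$ and aim at $\|T_uf\|_{\Phi}\ge c\,\mathrm{dist}(f,\mathcal{N}(T_u))$. Jensen's inequality gives $E(\Phi(|T_uf|))\ge\Phi(|E(T_uf)|)=\Phi(|E(u)E(f)|)$, which after integration bounds $\|T_uf\|_{\Phi}$ below by a multiple of $\|E(u)E(f)\|_{\Phi}$ and so controls the $\mathcal{A}$-measurable component $E(f)$. The hypothesis then enters through the modular identity $\int_X\Phi(|uE(f)|/\lambda)\,d\mu=\int_X E\bigl(\Phi(|u|\,|E(f)|/\lambda)\bigr)\,d\mu$, which ties the size of $u$ tested against $\mathcal{A}$-measurable weights to $E(\Phi(|u|))$, and where $\triangle_2$ again converts the uniform bound $E(\Phi(|u|))>\delta$ into the desired modular estimate.

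I expect the main obstacle to be the \emph{conditional-mean-zero} part $f-E(f)$: on this subspace $T_u$ acts as multiplication by $E(u)$, so a naive lower bound there seems to demand a positive lower bound on $|E(u)|$ rather than on $E(\Phi(|u|))$. The crux of the sufficiency argument is therefore to show that on $S$ the coupling term $(u-E(u))E(f)$, combined with $E(\Phi(|u|))>\delta$ through $\Phi(|E(u)|)\le E(\Phi(|u|))$, suffices to recover the lower bound for the full $\|T_uf\|_{\Phi}$; a secondary point I would have to fold into the same scheme is the action $f\mapsto uE(f)$ off $S$, where $E(u)=0$.
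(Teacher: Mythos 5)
Your necessity argument is correct and is in substance the paper's own proof of that direction: the paper likewise reduces closed range to boundedness below on $L^{\Phi}(S)$ and tests on $\mathcal{A}$-measurable characteristic functions $\chi_{Q}$ with $Q\subseteq S$, converting the modular $\int\Phi(|u\chi_{Q}|/\lambda)\,d\mu$ into $\int_{Q}E\bigl(\Phi(|u|/\lambda)\bigr)\,d\mu$ exactly as you propose. In fact your preliminary observation --- that every $f\in\mathcal{N}(T_u)$ satisfies $E(f)=0$ and hence $f=0$ a.e.\ on $S$, so that $\mathrm{dist}(\chi_A,\mathcal{N}(T_u))=\|\chi_A\|_{\Phi}$ for $\mathcal{A}$-measurable $A\subseteq S$ --- supplies a justification the paper omits when it simply asserts that closed range gives $\|T_uf\|_{\Phi}\geq k\|f\|_{\Phi}$ on $L^{\Phi}(S)$.

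The sufficiency half, however, is not a proof: your final paragraph correctly names the obstruction and leaves it open, and the gap is fatal to the plan as designed. If $E(f)=0$ then $T_uf=E(u)f+(u-E(u))E(f)=E(u)f$, so the ``coupling term'' you hope to exploit vanishes identically on the conditional-mean-zero subspace, where $T_u$ is exactly multiplication by $E(u)$; and the hypothesis $E(\Phi(|u|))>\delta$ on $S$ gives no lower bound on $|E(u)|$ there, because Jensen's inequality $\Phi(|E(u)|)\leq E(\Phi(|u|))$ runs in the wrong direction. Concretely, take $X=[0,1]\times\{\pm1\}$ with $E(f)(x,\cdot)=\tfrac{1}{2}\bigl(f(x,1)+f(x,-1)\bigr)$, $\Phi(t)=t^{2}$, $u(x,1)=1$, $u(x,-1)=2x-1$: then $E(u)(x)=x$, so $S$ has full measure and $E(\Phi(|u|))\geq\tfrac12$ a.e., yet $\mathcal{N}(T_u)=\{0\}$ while $T_uf=xf$ on mean-zero $f$, so $T_u$ is not bounded below modulo its kernel and its range is not closed --- your suspicion that a lower bound on $|E(u)|$ is what sufficiency really demands is thus exactly right, and no completion of your scheme can avoid it. For comparison, the paper's sufficiency proof takes a different, sequential route (given $T_uf_n\to g$ it manipulates $E(f_n)=E(T_uf_n)/E(u)$ on $S$), but it glosses over the very same point: it divides by $E(u)$ with no lower bound on $|E(u)|$ (using along the way the false identity $E(1/\Phi(|u|))\chi_S=(1/E(\Phi(|u|)))\chi_S$), and its final step --- from convergence of $E(f_n)$ to convergence of $f_n$ itself and $g=T_uf$ --- is a non sequitur. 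So the obstacle you flagged is genuine, and it marks a defect in this direction of the paper's argument, not merely in your attempt.
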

\begin{proof}
Let $S:=\{x\in X: E(u)(x)\neq 0\}$ be the support of $E(u)$. If
$T_u$ has closed range, then it is bounded below on $L^{\Phi}(S)$,
i.e., there exists a constant $k
> 0$ such that
$$\|T_uf\|_{\Phi}\geq k\|f\|_{\Phi},\quad\quad f\in L^{\Phi}(S)~.$$

Let $\delta=k/2$, and put $U := \{x \in S: E(\Phi(|u|))(x) <
\delta\}$~. Suppose on contrary $\mu(U)> 0$. Since
$(X,\mathcal{A},\mu|_{\mathcal{A}})$ is a $\sigma$-finite measure
space, we can find a set $B \in \mathcal{A}$ such that $Q := B\cap
S\subseteq U$ with $0<\mu(Q)<\infty$~. Then the
$\mathcal{A}$-measurable characteristic function $\chi_{Q}$ lies
in $L^{\Phi}(S)$. It is known that,
 $\|\chi_{Q}\|_{\Phi}=\frac{1}{\Phi^{-1}(1/\mu(Q))}$ and
\begin{align*}
  \|T_u\chi_{Q}\| &=\inf \{\varepsilon : \int_{S}\Phi(|u\chi_{Q}|/\varepsilon)
  d\mu\leq1\} \\
                  &<\inf \{\varepsilon : \int_{S}\Phi(|\Phi^{-1}(\delta)
                  \chi_{Q}|/\varepsilon)d\mu\leq1\} \\
                  &=\|\Phi^{-1}(\delta)\chi_{Q}\|_{\Phi}=\Phi^{-1}(\delta)
                  \|\chi_{Q}\|_{\Phi}~,
\end{align*}
which is a contradiction. Therefore, $\mu(U) = 0$, i.e.,
 $E\left(\Phi(|u|)\right) > \delta$ a.e. on $S$.
Conversely, suppose $E\left(\Phi(|u|)\right) > \delta$ a.e. on $S$
and
 $\{T_uf_n\}_{n=0}^{\infty}$ be an arbitrary
sequence in $\mathcal{R}(T_u)$, such that $\|T_uf_n -
g\|_{\Phi}\rightarrow 0$\,\ as $n \rightarrow \infty$, for
 some $g \in L^{\Phi}(\Sigma)$~. Hence
$$E(T_uf_n) = E(u)E(f_n)\xrightarrow{L^\Phi(\Sigma)} E(g),\,\
 \mbox{as}\,\ n \rightarrow \infty~.$$
Since, $E(1/\Phi(|u|))\chi_{S}=(1/E(\Phi(|u|)))\chi_{S}$~, then we
have
 $\Phi(E(1/|u|))\chi_{S}\leq 1/\delta$, and so we get that
 $E(1/|u|)\leq \Phi^{-1}(1/\delta)$ a.e. on $S$. Therefore, we have
\begin{align*}
  \|\frac{E(g)}{E(u)}\chi_{S}\|_{\Phi} &=\inf\{\varepsilon>0 :
   \int_{S}\Phi(|\frac{E(g)}{E(u)}\chi_{S}|/\varepsilon)d\mu\leq 1\}\\
   &=\inf\{\varepsilon>0 : \int_{S}\Phi(|E(g)E(\frac{1}{u})\chi_{S}|/\varepsilon)
   d\mu\leq 1\}\\
   &\leq\inf\{\varepsilon>0 : \int_{S}\Phi(|E(g)E(\frac{1}{|u|})\chi_{S}|/\varepsilon)
   d\mu\leq 1\}\\
   &\leq\inf\{\varepsilon>0 : \int_{S}\Phi(|\frac{|E(g)|}{\Phi^{-1}(\delta)
   }\chi_{S}|/\varepsilon)d\mu\leq 1\}\\
   &\leq \frac{1}{\Phi^{-1}(\delta)}\|g\|_{\Phi}~.
\end{align*}
This follows that $\frac{E(g)}{E(u)}\chi_{S}\in L^{\Phi}(S)$. Consequently,
$$E(f_n)\xrightarrow{L^\Phi(\Sigma)}\frac{E(g)}{E(u)}\chi_{S},\,\
 \mbox{as}\,\ n \rightarrow \infty ~,$$
and so there exist $f \in L^\Phi(\Sigma)$ such that,
$$f_n\xrightarrow{L^\Phi(\Sigma)}f,
\,\ \mbox{as}\,\ n \rightarrow \infty~.$$ Thus $T_uf_n
\xrightarrow{L^\Phi(\Sigma)} T_uf, \,\
 \mbox{as}\,\ n \rightarrow \infty$, and hence $g =
T_uf$, which implies that $T_u$ is closed range.
\end{proof}
Recall (See \cite{Zaanen}) that $T_u$ is said to be a Fredholm
operator if $\mathcal{R}(T_u)$ is closed, $dim\mathcal{N}(T_u)
<\infty$, and $codim\mathcal{R}(T_u) < \infty$. Also, recall that
an $\mathcal{A}$-atom of the measure $\mu$ is an element $A\in
\mathcal{A}$ with $\mu(A)>0$ such that for each $F\in \Sigma$, if
$F\subseteq A$ then either $\mu(F)=0$ or $\mu(F)=\mu(A)$. A
measure with no atoms is called non-atomic. It is a well-known
fact that every $\sigma$-finite measure space
$(X,\mathcal{A},\mu|_\mathcal{A})$ can be partitioned uniquely as
$$X=(\bigcup_{n\in \mathbb{N}}A_n )\cup B~,$$
where $\{A_n\}_{n\in \mathbb{N}}$ is a countable collection of
pairwise disjoint $\mathcal{A}$-atoms and $B$, being disjoint from
each $A_n$, is non-atomic .\\

The following Theorem is a generalization of Prop. 3 due to Takagi
in \cite{Takagi}:

\begin{thm}\label{t2}
Let $u\in K^{*}_{\Phi}$ and $\mathcal{A}$ is a non-atomic measure
space. If the operator $T_u$ is Fredholm on $L^{\Phi}(\Sigma)$,
then we have $|E(u)|\geq \delta$, almost everywhere on $X$, for
some $\delta > 0$.
\end{thm}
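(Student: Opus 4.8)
The plan is to argue by contradiction: assume the conclusion fails, so that $\mu(\{x : |E(u)(x)| < \delta\}) > 0$ for every $\delta > 0$, and manufacture from this a normalized sequence on which $T_u$ acts asymptotically like the zero map, yet which has no convergent subsequence. This will contradict the compactness consequences of Fredholmness, namely the finite dimensionality of $\mathcal{N}(T_u)$ together with the closedness of $\mathcal{R}(T_u)$. (Only these two features of a Fredholm operator are needed; the finiteness of $\mathrm{codim}\,\mathcal{R}(T_u)$ will not enter.)

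First I would rewrite the operator in the convenient form $T_u f = E(u)\,f + (u - E(u))\,E(f)$, which follows at once from $u * f = fE(u) + uE(f) - E(u)E(f)$. The point of this splitting is that the first summand is a genuine (local) multiplication by the $\mathcal{A}$-measurable symbol $E(u)$, while the troublesome nonlocal summand carries the factor $v := u - E(u)$, which satisfies $E(v) = 0$. The whole difficulty is to arrange that this nonlocal term is annihilated on a suitable test function, so that $T_u$ reduces to multiplication by the small symbol $E(u)$.

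The core construction proceeds as follows. Using non-atomicity of $\mathcal{A}$ together with $\sigma$-finiteness, select pairwise disjoint sets $E_k \in \mathcal{A}$ with $0 < \mu(E_k) < \infty$ and $|E(u)| < 1/k$ on $E_k$. This is exactly where non-atomicity is essential: should $\{E(u) = 0\}$ have positive measure, it must be split into infinitely many pieces, which is impossible in the presence of atoms, and this is the phenomenon responsible for the weaker Fredholm criterion in Takagi's atomic setting. On each $E_k$ I would produce a unit vector $f_k$ supported in $E_k$ for which the nonlocal term vanishes: if $v\chi_{E_k} \neq 0$, set $f_k = v\chi_{E_k}/\|v\chi_{E_k}\|_\Phi$, noting $E(f_k) = 0$ because $E(v\chi_{E_k}) = \chi_{E_k}E(v) = 0$; otherwise $v = 0$ a.e. on $E_k$ and I take $f_k = \chi_{E_k}/\|\chi_{E_k}\|_\Phi$. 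That $v\chi_{E_k} \in L^\Phi(\Sigma)$ uses $u \in K^{*}_{\Phi}$, i.e. $E(\Phi(|u|)) \in L^\infty(\mathcal{A})$, together with $\mu(E_k) < \infty$. In either case $(u - E(u))E(f_k) = 0$, so $T_u f_k = E(u)\,f_k$, and since $|E(u)| < 1/k$ on the support of $f_k$, monotonicity and homogeneity of the Orlicz norm yield $\|T_u f_k\|_\Phi \leq (1/k)\|f_k\|_\Phi = 1/k \to 0$.

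Finally I would extract the contradiction. Because the $E_k$ are disjoint, $|f_k| \leq |f_k - f_l|$ pointwise for $k \neq l$, so $\|f_k - f_l\|_\Phi \geq 1$ and $(f_k)$ has no convergent subsequence. On the other hand, write $L^\Phi(\Sigma) = \mathcal{N}(T_u) \oplus \mathcal{C}$ with $\mathcal{N}(T_u)$ finite dimensional, hence complemented, and $\mathcal{C}$ closed; since $\mathcal{R}(T_u)$ is closed, $T_u|_{\mathcal{C}} : \mathcal{C} \to \mathcal{R}(T_u)$ is a bounded bijection of Banach spaces and is therefore bounded below, say $\|T_u h\|_\Phi \geq c\|h\|_\Phi$ for $h \in \mathcal{C}$. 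Decomposing $f_k = n_k + c_k$ gives $c\|c_k\|_\Phi \leq \|T_u c_k\|_\Phi = \|T_u f_k\|_\Phi \to 0$, so $c_k \to 0$ and $(n_k)$ is bounded in the finite dimensional space $\mathcal{N}(T_u)$; a convergent subsequence of $(n_k)$ then forces a convergent subsequence of $(f_k)$, the desired contradiction. I expect the main obstacle to be the middle step, guaranteeing that the nonlocal term $(u - E(u))E(f_k)$ can be killed by a unit vector supported where $|E(u)|$ is small, since this is precisely where the conditional-expectation structure, as opposed to ordinary multiplication, interferes; the functional-analytic endgame and the measure-theoretic selection are comparatively routine.
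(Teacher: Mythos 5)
Your proposal is correct, but it takes a genuinely different route from the paper's. The paper exploits the \emph{other} half of Fredholmness: it first proves $T_u$ is surjective by picking $g\notin\mathcal{R}(T_u)$, separating it from the closed range by a functional $g^*\in L^{\Psi}(\Sigma)=(L^{\Phi}(\Sigma))^*$, and using non-atomicity of $\mathcal{A}$ to cut $g^*$ into infinitely many disjointly supported, nonzero elements $\chi_{A_n}g^*$ of $\mathcal{N}(T_u^*)$, contradicting $\mathrm{codim}\,\mathcal{R}(T_u)<\infty$; it then feeds surjectivity an explicit $\mathcal{A}$-measurable target function built on the level sets $H_n$ where $\Phi(|E(u)|)\leq \|E(\Phi(|u|))\|_{\infty}/n^2$, and the resulting divergence of $\sum_{n\in H}1$ forces all but finitely many $H_n$ to be null. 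You instead use only the upper semi-Fredholm half (closed range plus $\dim\mathcal{N}(T_u)<\infty$), manufacturing a singular sequence of disjointly supported unit vectors on which $T_u$ is asymptotically small; the decisive trick, absent from the paper, is taking $f_k$ proportional to $(u-E(u))\chi_{E_k}$, so that $E(f_k)=\chi_{E_k}E(u-E(u))=0$ annihilates the nonlocal term and reduces $T_u$ to multiplication by the small symbol $E(u)$ on $E_k$ (your fallback $f_k=\chi_{E_k}/\|\chi_{E_k}\|_{\Phi}$ when $u=E(u)$ a.e.\ on $E_k$ also works, since then $E(f_k)$ is supported where $u-E(u)$ vanishes). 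What each approach buys: yours avoids duality entirely --- no Riesz representation, no identification $(L^{\Phi})^*=L^{\Psi}$, hence less reliance on the $\Delta_2$ machinery --- and in fact establishes the stronger statement that the conclusion already follows from upper semi-Fredholmness; the paper's argument, conversely, uses only closed range plus finite codimension (lower semi-Fredholmness) and yields the extra information that $T_u$ is onto. The two proofs also deploy non-atomicity at different points: the paper when splitting $B_r=\{|E(gg^*)|\geq r\}$, you only when $\{E(u)=0\}$ has positive measure, the remaining case being handled by your nested-sets selection inside $\{|E(u)|<1/k\}$ without atoms entering, exactly as you observe. Your supporting steps all check out: $(u-E(u))\chi_{E_k}\in L^{\Phi}(\Sigma)$ follows from $\int_{E_k}\Phi(|u|)\,d\mu=\int_{E_k}E(\Phi(|u|))\,d\mu\leq\|E(\Phi(|u|))\|_{\infty}\,\mu(E_k)<\infty$ together with the boundedness of $E(u)$ on $E_k$; the disjoint supports give $\|f_k-f_l\|_{\Phi}\geq 1$; and the bounded-below-modulo-finite-dimensional-kernel endgame is the standard open-mapping argument.
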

\begin{proof}

Suppose that $T_u$ is a Fredholm operator. We first claim that
$T_u$ is onto. Suppose the contrary, pick $g \in
L^{\Phi}(\Sigma)\setminus\mathcal{R}(T_u)$. Since
$\mathcal{R}(T_u)$ is closed, $\Phi$ and $\Psi$ are complementary
Young functions and $\Phi$ satisfies $\bigtriangleup_2$-condition,
it follows from \cite{Rao} ( Corollary 5, page 77 and Theorem 7
page 110 ) that we can find a function $g^*\in
L^{\Psi}=(L^{\Phi})^*$, such that
$$\text{(I)}\quad\quad \int_{X} {g}g^*d\mu=1~,$$
and
$$\text{(II)}\quad\quad \int_{X} g^*{T_uf}d\mu=0,\ f\in L^{\Phi}
(\Sigma)~.$$ Now $\text{(I)}$ yields that the set $B_r = \{x\in
X:|E({g}g^*)(x)| \geq r\}$ has positive measure for some $r>0$. As
$\mathcal{A}$ is non-atomic, we can choose a sequence $\{A_n\}$ of
subsets of $B_r$ with $0<\mu(A_n)<\infty$ and $A_m \bigcap A_n
=\emptyset$ for $m\neq n$. Put $g^*_n=\chi_{A_n}g^*$. Clearly,
$g^*_n\in L^{\Psi}(\Sigma)$ and is nonzero, because
$$\int_{X}|{g}g^*_n|d\mu \geq \int_{A_n}|{g}g^*_n|d\mu=\int_{A_n}
E(|{g}g^*|)d\mu\geq \int_{A_n}|E({g}g^*)|d\mu\geq r\mu(A_n)>0~,$$
for each $n$. Also, for each $f \in L^{\Phi}(\Sigma)$,
$\chi_{A_n}f\in L^{\Phi}(\Sigma)$ and so $\text{(II)}$ implies
that
$$\int_{X} T^*_ug^*_n{f}d\mu=\int_{X} g^*_n{T_uf}d\mu=\int_{A_n} g^*{T_uf}d\mu
 = \int_{X}g^*{T_u(\chi_{A_n} f)}d\mu=0~,$$ which implies that $T^*_ug^*_n=0$
and so $g^*_n \in \mathcal{N}(T^*_u)$~. Since all the sets in
$\{A_n\}_n$ are disjoint, the sequence $\{g_n\}_n$ forms a
linearly independent subset of $\mathcal{N}(T^*_u)$. This
contradicts the fact that $\dim N(T^*_u)=\text{codim}
\mathcal{R}(T_u)<\infty$. Hence $T_u$ is onto. Let $Z(E(u)) :=
\{x\in X: E(u)(x) = 0\}$. Then $\mu(Z(E(u)))=0$. Since, if
$\mu(Z(E(u)))>0$, then there is an $F\subseteq Z(E(u))$ with
$0<\mu(F)< \infty$. If $\chi_F\in R(T_u)$, then there exists $f\in
L^{\Phi}(\Sigma)$ such that $T_uf = \chi_F$. Then
$$\mu(F)=\int_X\chi_Fd\mu=\int_F T_ufd\mu=\int_F E(T_uf)d\mu=\int_F E(u)E(f)d\mu=0~,$$
and this is a contradiction. So $\chi_F\in L^{\Phi}(\Sigma)
\setminus \mathcal{R} (T_u)$, which contradicts the fact that
$T_u$ is onto. For each $n=1,2,...,$ let
$$H_n=\{x\in X: \frac{\|E(\Phi(|u|))\|_{\infty}}{(n + 1)^2} < \Phi(|E(u)|)(x)
\leq \frac{\|E(\Phi(|u|))\|_{\infty}}{n^2} \}~,$$ and $H=\{n \in
\mathbb{N}:\mu(H_n)>0\}$. Then the $H_n$'s are pairwise disjoint,
$X=\bigcup_{n=1}^{\infty}H_n$ and $\mu(H_n)<\infty$ for each
$n\geq 1$. Take
\begin{equation*}
f(x) = \left\{
\begin{array}{rl}
     |E(u)|\Phi^{-1}(1/\mu(H_n))&x\in H_n, n\in H\\
        &  \\
     0 \quad\quad\quad\quad &\text{otherwise.}
\end{array} \right.
\end{equation*}
Then
\begin{align*}
  \int_X \Phi\left(\frac{|f(x)|}{\|E(\Phi(|u|))\|_{\infty}}\right)d\mu
   & =\sum_{n=1}^{\infty}\left(\int_{H_n}\Phi\left(\frac{\Phi(|E(u)|)(x)}
   {\|E(\Phi(|u|))\|_{\infty}}
   \Phi^{-1}\left(\frac{1}{\mu(H_n)}\right)\right)d\mu\right) \\
   &\leq \sum_{n=1}^{\infty}\left(\int_{H_n}\Phi\left(\frac{1}{n^2}
   \Phi^{-1}\left(\frac{1}{\mu(H_n)}\right)\right)d\mu\right) \\
   & \leq \sum_{n=1}^{\infty}\frac{1}{n^2}\frac{1}{\mu(H_n)}\int_{H_n}d\mu
   =\sum_{n=1}^{\infty}\frac{1}{n^2}<\infty~.
\end{align*}
Therefore, $f \in L^\Phi(\mathcal{A})$ and so there exist $g \in
L^\Phi(\Sigma)$
 such that $T_ug=f$. Hence
$E(u)E(g)=E(T_ug)=f$. Since $E(g) = f/E(u)$ except of $Z(E(u))$
and $\mu(Z(E(u)))=0$,
 it follows that
\begin{align*}
  \int_X\Phi(|g|)d\mu= \int_XE(\Phi(|g|))d\mu &\geq \int_X\Phi(|E(g)|)d\mu \\
                 &= \int_X\Phi(\frac{f}{|E(u)|})d\mu=\sum_{n\in H}\frac{1}{\mu(H_n)}
                  =\sum_{n\in H}1~.
\end{align*}
This implies that $H$ must be a finite set. So there is an $n_0$ such that
 $n \geq n_0$ implies $\mu(H_n)=0$. Together with $\mu(Z(E(u)))=0$, we obtain
$$\mu\left(\left\{x\in X:\Phi(|E(u)|)(x)\leq \frac{\|E(\Phi(|u|))\|_{\infty}}
{n_0^2}\right\}\right)= \mu\left(\bigcup_{n=n_0}^{\infty}H_n\cup
Z(E(u))\right)=0~,$$ that is $|E(u)|\geq
\Phi^{-1}\left(\|E(\Phi(|u|))\|_{\infty}/n_0^2\right)
 :=\delta$ almost everywhere on $X$.

\end{proof}

\begin{exam}
Let $X=[-3,3]$, $d\mu=dx$, let $\Sigma$ be the Lebesgue sets, and
$\mathcal{A}$ the $\sigma$-subalgebra generated by the sets
symmetric about the origin. Put $0<a\leq 3$. Let
$\Phi(x)=(1+|x|)\log(1+|x|)-|x|$. It is verified that $\Phi\in
\bigtriangleup_{2}$. For each $f\in L^{\Phi}(\Sigma)$ we have
\begin{align*}
  \int^{a}_{-a}E(f)(x)dx &= \int^{a}_{-a}f(x)dx\\
                         &= \int^{a}_{-a}\left\{\frac{f(x)+f(-x)}{2}+\frac{f(x)-f(-x)}{2}\right\}dx\\
                         &= \int^{a}_{-a}\frac{f(x)+f(-x)}{2}dx.
\end{align*}

Consequently, $(Ef)(x)=\frac{f(x)+f(-x)}{2}$. Now, if we take $u(x)=x^{4} +\sin x +3$,
then the $*$-multiplication operator $T_u$  has the form
$$(T_u)(x)=\left(x^{4}+\frac{1}{2}\sin x+3\right)f(x)+\left(\frac{1}{2}\sin x\right) f(-x).$$
Direct computation shows that $|E(u)| \geq 3$. Therefore, $T_u$ is a Fredholm  operator.
\end{exam}


\end{document}